\newcommand{\titel}{Thoughts on Barnette's Conjecture}
\theoremstyle{plain} % kursiv
	\newtheorem{satz}{Satz}[] %section
	\newtheorem{theorem}[satz]{Theorem}
	\newtheorem{lemma}[satz]{Lemma}
	\newtheorem*{conjecture*}{Conjecture} 
	\newtheorem{corollary}[satz]{Corollary}
\begin{document}
	\title{\bf\titel\footnote{Research supported by an Australia--Germany Go8--DAAD collaboration grant.}}
	\author{Helmut Alt\footnotemark[2] \and Michael S. Payne\footnotemark[3] \and Jens M. Schmidt\footnotemark[4] \and David R. Wood\footnotemark[5]}
	
	\maketitle
	
	\footnotetext[2]{Institute of Computer Science, Freie Universit\"at Berlin, Germany (\texttt{alt@mi.fu-berlin.de}).}

	\footnotetext[3]{Department of  Mathematics and Statistics, The University of Melbourne, 
  Australia   (\texttt{m.payne3@pgrad.unimelb.edu.au}). Research supported by an Australian Postgraduate Award. }

	\footnotetext[4]{Department of Algorithms and Complexity, Max Planck Institute for Informatics, Saarbr\"ucken, 
Germany (\texttt{jens.schmidt@mpi-inf.mpg.de}).}

	\footnotetext[5]{School  of  Mathematical Sciences, Monash University, Melbourne, Australia (\texttt{david.wood@monash.edu}). Research supported by the Australian Research Council.}

\begin{abstract}
We prove a new sufficient condition for a cubic 3-connected planar graph to be Hamiltonian. This condition is most easily described as a property of the dual graph. Let $G$ be a planar triangulation. Then the dual $G^*$ is a cubic 3-connected planar graph, and $G^*$ is bipartite if and only if $G$ is Eulerian. We prove that if the vertices of $G$ are (improperly) coloured blue and red, such that the blue vertices cover the faces of $G$, there is no blue cycle, and every red cycle contains a vertex of degree at most 4, then $G^*$ is Hamiltonian.

This result implies the following special case of Barnette's Conjecture: if $G$ is an Eulerian planar triangulation, whose vertices are properly coloured blue, red and green, such that every red-green cycle contains a vertex of degree 4, then $G^*$ is Hamiltonian. Our final result highlights the limitations of using a proper colouring of $G$ as a starting point for proving Barnette's Conjecture. We also explain related results on Barnette's Conjecture that were obtained by Kelmans and for which detailed self-contained proofs have not been published.
\end{abstract}

%\begin{center}
%\small \textbf{keywords:} ...
%\end{center}

\section{Introduction}

The study of Hamiltonian cycles in cubic planar graphs has a rich history, originally motivated by Tait's conjecture that every cubic 3-connected planar graph is Hamiltonian (which implies the 4-colour theorem). Tutte~\cite{Tutte} disproved Tait's conjecture, which led to the following conjecture of Barnette:

\begin{conjecture*}[Barnette~\cite{Barnette1969a}]
\label{BarnettesConjecture}
Every cubic $3$-connected planar bipartite graph is Hamiltonian.
\end{conjecture*}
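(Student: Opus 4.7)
The plan is to exploit planar duality. Since $G$ is cubic, $3$-connected and planar, its dual $G^*$ is a planar triangulation, and the hypothesis that $G$ is bipartite translates to $G^*$ being Eulerian (every vertex of $G^*$ has even degree). A classical correspondence states that a Hamilton cycle in a cubic plane graph $G$ corresponds bijectively to a partition $V(G^*)=A\sqcup B$ such that both $G^*[A]$ and $G^*[B]$ are induced trees. So Barnette's Conjecture is equivalent to: every Eulerian planar triangulation admits a partition of its vertex set into two induced trees. This reformulation is the workhorse of essentially every known partial result, and it is what I would start from.

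The first concrete step is to invoke Heawood's theorem that an Eulerian planar triangulation is properly $3$-vertex-colourable, producing three independent colour classes that I will call blue, red, and green, so that every face of $G^*$ uses one vertex of each colour. One natural attack is to examine the three ``two-against-one'' partitions such as $(\text{blue},\,\text{red}\cup\text{green})$, and to argue that at least one of them, possibly after some cycle-breaking reassignment of vertices between the two sides, gives two induced trees. The sufficient condition announced in the abstract follows this template exactly: the ``covering'' side (blue) is automatically acyclic because it is independent and hits every face, so the entire difficulty is pushed onto the bichromatic side, where cycles must be destroyed by a local witness (there, a vertex of degree at most $4$ on every red cycle).

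The hard part, and the reason Barnette's Conjecture has remained open since $1969$, is breaking \emph{arbitrary} cycles on the bichromatic side. When every such cycle contains a vertex of degree $4$, the theorem announced in the abstract supplies a local reduction; in general, however, all cycles in $\text{red}\cup\text{green}$ might pass through only high-degree vertices, and no local recolouring rule is known to handle this situation. I expect that a complete proof would need a genuine reducibility theory for Eulerian triangulations, in the spirit of the Appel--Haken configurations for the Four Colour Theorem, coupled with a discharging argument producing an unavoidable reducible set; alternatively, a non-local approach based on Tutte-style path extensions in the cubic graph $G^*$ using its $3$-connectedness. Either ingredient appears to require ideas well beyond the local-colouring framework, and this is precisely where the plan stalls.
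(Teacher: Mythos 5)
You have not proved the statement, and neither does the paper: this is Barnette's Conjecture, which is open, and the paper only establishes sufficient conditions for special cases. Your reformulation is correct and matches the paper's framework exactly: the duality between Hamiltonicity of $G$ and a partition of the Eulerian triangulation $G^*$ into two induced (permeating) trees is Lemma~\ref{lem:PermeatingTree}, and the ``independent covering class versus bichromatic class with local cycle-breaking'' template is precisely Theorem~\ref{Main} and Corollary~\ref{ColouredCorollary}. Your diagnosis of where the plan stalls --- breaking cycles in $\text{red}\cup\text{green}$ that avoid low-degree vertices --- is also the right one.

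One point worth adding: the paper's Theorem~\ref{Construction} shows that the specific repair you float (trying each of the three two-against-one partitions of the colour classes, perhaps with some reassignment) is provably doomed as a general strategy. There exist properly $3$-coloured Eulerian triangulations in which \emph{every} permeating subtree must both contain and exclude at least $k$ vertices from \emph{each} colour class, so no permeating tree can be anchored to a full colour class or to its complement. Any successful attack must therefore mix the colour classes in an essential way, which rules out the ``natural attack'' as stated and confirms that something qualitatively stronger (your suggested reducibility/discharging machinery, or a Tutte-style non-local argument) would be needed. In short: your account of the state of the art is accurate, but the statement remains a conjecture and no proof is offered here or in the paper.
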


This paper proves a new sufficient condition for a  cubic 3-connected planar graph to be Hamiltonian. This condition is most easily described as a property of the dual graph. The dual of a cubic 3-connected planar graph $G$ is a planar triangulation $G^*$, and $G$ is bipartite if and only if $G^*$ is Eulerian (that is, every vertex has even degree). The following is our main result. 

\begin{theorem}
\label{Main}
Let $G$ be a planar triangulation, whose vertices are coloured blue and red, such that the blue vertices hit every face of $G$, there is no blue cycle, and every red cycle contains a vertex of degree at most 4. Then $G^*$ is Hamiltonian.
\end{theorem}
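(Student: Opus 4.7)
The plan is to translate Hamiltonicity of $G^*$ into a partition condition on $V(G)$ and then construct such a partition from the given colouring. By classical planar duality, for a plane triangulation $G$ the cubic dual $G^*$ is Hamiltonian if and only if $V(G)$ admits a \emph{2-tree partition}, that is, a partition $V(G)=A\sqcup B$ with both $G[A]$ and $G[B]$ trees: a Hamiltonian cycle in $G^*$ separates the plane into two disks whose $G$-vertices induce connected acyclic subgraphs, and conversely the bichromatic edges of any 2-tree partition form a 2-factor in $G^*$ which is Hamiltonian by 3-connectedness. Thus it suffices to build such a partition, and the given blue/red colouring is the natural starting point.

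The hypotheses already supply most of the structure. Since the blue vertices cover every face and ``no blue cycle'' in particular forbids blue triangles, every face also contains a red vertex, so both colours hit every face. A face-edge double-count shows that whenever $G[B]$ and $G[R]$ are forests, each face has exactly one monochromatic edge, and altogether $|E(G[B])|+|E(G[R])|=|V(G)|-2$; in particular both induced subgraphs are trees as soon as both are connected. The task therefore reduces to modifying the colouring so that $G[R]$ becomes acyclic and both classes become connected, while maintaining the invariants ``blue covers every face'' and ``$G[B]$ is acyclic''.

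The degree hypothesis drives the elimination of red cycles. A face-cover argument first rules out red cycles through low-degree vertices: if $v$ is a red vertex of degree $d\in\{3,4\}$, the $d$ faces at $v$ each contain two cyclically consecutive neighbours of $v$ and must contain a blue vertex, so no two consecutive neighbours of $v$ are both red. For $d=3$ this forces at most one red neighbour, and hence $v$ cannot lie on a red cycle. For $d=4$ it forces, on a red cycle, exactly two opposite red neighbours $v_1,v_3$ and two opposite blue neighbours $v_2,v_4$. Recolouring such a $v$ blue kills every red cycle through $v$; the only way this creates a blue cycle is if $v_2$ and $v_4$ already lie in the same component of $G[B]$, in which case a compensating swap---recolouring a carefully chosen vertex on the would-be blue cycle back to red---destroys the blue cycle without creating a red one. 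Iterating this procedure strictly decreases a suitable potential (for instance the number of red-cycle edges), eventually yielding a forest $G[R]$.

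Once both colour classes are forests, a parallel local-swap argument using the face structure achieves simultaneous connectedness, completing the 2-tree partition. The hard step, I expect, is the compensating swap above: one must guarantee the existence of a blue vertex whose recolouring to red breaks the nascent blue cycle without reintroducing a red cycle or violating the face-cover condition. This will require a careful local analysis exploiting the planar embedding of $G$ near $v$, together with a global invariant that controls how the blue forest interacts with the red cycles, and this is where the bulk of the proof's work will lie.
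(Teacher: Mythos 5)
Your reduction to a two-tree partition of $V(G)$ is exactly the paper's Lemma~\ref{lem:PermeatingTree} (though your justification that the bichromatic edges form a Hamiltonian cycle ``by 3-connectedness'' is not right as stated --- a 2-factor of a cubic 3-connected graph need not be connected; the correct argument contracts each tree and counts edges via Euler's formula). The real problem is that everything after the reduction is a plan rather than a proof, and the plan is missing its central idea. Your scheme is iterative and local: recolour a degree-4 red vertex on a red cycle to blue, then perform a ``compensating swap'' if a blue cycle appears, then run a further ``parallel local-swap argument'' to make both classes connected. You never exhibit the compensating vertex, never show the swap preserves the face-cover condition and acyclicity of both classes simultaneously, never prove the potential you gesture at actually decreases (a swap that turns a blue vertex red can create new red cycles, including ones with no degree-$\le 4$ vertex, so the hypothesis no longer controls them after the first modification), and never give the connectivity step at all. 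You say yourself that this is ``where the bulk of the proof's work will lie''; that is precisely the part that constitutes the theorem.

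The paper avoids all of this with a single global construction that your proposal has no analogue of. Let $R$ be the subgraph induced by the red vertices and let $H$ be its planar dual; call an edge of $R$ \emph{short} if an endpoint has degree at most 4 in $G$. The hypothesis ``every red cycle contains a vertex of degree at most 4'' is exactly the statement that the short edges span a connected subgraph of $H$, because a minimal cut of $H$ consisting only of long edges would dualize to a red cycle all of whose vertices have degree at least 5. A spanning tree of this short-edge subgraph then selects one degree-4 red vertex per tree edge, and these vertices stitch the components of $G[B]$ (which correspond bijectively to the faces of $R$, i.e.\ to the vertices of $H$) into one permeating subtree $G[B\cup S]$; a short local analysis around each selected degree-4 vertex shows no unwanted edges or cycles arise. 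Note that the final partition is $B\cup S$ versus $R\setminus S$ with $S$ small --- the paper never needs to make $G[R]$ itself into a tree, which is the harder goal you set yourself. Without the duality observation linking red cycles to cuts of $H$, your iteration has no mechanism to terminate or to guarantee global connectivity, so the proposal as written does not prove the theorem.
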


Note that every Eulerian triangulation has a unique proper $3$-colouring. % up to a permutation of the colours. Technically we should say "up to renaming the colours" but this seems unnecessary.
Theorem~\ref{Main} implies the following corollary for Eulerian triangulations, which via duality can be thought of as a particular case in which Barnette's Conjecture holds. This corollary is also implied by a recent result of Florek~\cite{Florek12}.

\begin{corollary}
\label{ColouredCorollary}
Let $G$ be an Eulerian planar triangulation, whose vertices are properly coloured blue, red and green, such that every red-green cycle contains a vertex of degree 4. Then $G^*$ is Hamiltonian.
\end{corollary}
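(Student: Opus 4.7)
The plan is to derive the corollary directly from Theorem~\ref{Main} by merging two of the three colour classes. Given a proper blue/red/green colouring of the Eulerian triangulation $G$, I would keep the blue vertices blue and recolour every red or green vertex as red, producing a 2-colouring of $V(G)$. It then suffices to verify the three hypotheses of Theorem~\ref{Main} for this 2-colouring.

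First, every face of $G$ is a triangle, and because the 3-colouring is proper, each such triangle must use all three colours; in particular every face contains (exactly one) blue vertex, so the blue vertices hit every face. Second, properness means the blue class is an independent set of $G$, so there is not even a single edge among blue vertices, let alone a blue cycle. Third, the red vertices of the new 2-colouring are precisely the union of the original red and green classes, so a cycle on these vertices is exactly a red-green cycle in the original colouring; by hypothesis every such cycle contains a vertex of degree $4$, and $4 \le 4$.

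All three conditions of Theorem~\ref{Main} are therefore satisfied, and we conclude that $G^*$ is Hamiltonian. There is no real obstacle: the corollary is simply the specialisation of Theorem~\ref{Main} obtained by collapsing two colour classes, and the properness of the 3-colouring is what makes both "blue hits every face" (combined with the triangulation property) and "no blue cycle" automatic, while the red-green cycle hypothesis translates verbatim into the degree-$4$ condition on red cycles.
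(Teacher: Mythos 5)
Your proposal is correct and is essentially identical to the paper's own proof: both apply Theorem~\ref{Main} after recolouring the green vertices red, noting that properness makes the blue class an independent set (hence no blue cycle) and puts a blue vertex on every facial triangle, while the red-green cycle hypothesis becomes exactly the required condition on red cycles. You spell out the face-hitting step a little more explicitly than the paper does, but there is no difference in substance.
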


\begin{proof}
Apply Theorem~\ref{Main} with the red and green vertices all coloured red. There is no blue cycle since the blue vertices are an independent set.
% {\color{red} Jens: for the same reason, every face contains a blue vertex}. M: I think this is clear since it's a coloured triangulation.
By assumption, every red cycle contains a vertex of degree 4. 
\end{proof}

It is interesting to note that Theorem~\ref{Main} also implies the following corollary of Florek~\cite[Corollary 2.1]{Florek12}.

\begin{corollary}[Florek]
Let $G$ be an Eulerian planar triangulation, whose vertices are properly coloured blue, red and green. Let $X$ and $Y$ partition the vertices of degree at least $6$ such that all such red vertices are in $X$ and all such blue vertices are in $Y$. If the induced graphs $G[X]$ and $G[Y]$ are acyclic, then $G^*$ is Hamiltonian.
\end{corollary}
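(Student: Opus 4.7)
The plan is to derive this corollary from Theorem~\ref{Main} by constructing an appropriate 2-colouring of $V(G)$. Let $V_4$ denote the set of degree-$4$ vertices of $G$. I would take $D \subseteq V_4$ to be a maximal subset of blue degree-$4$ vertices (blue in $G$'s proper $3$-colouring) such that $G[Y \cup D]$ is acyclic; such a $D$ exists since $G[Y]$ is acyclic by hypothesis, and we may build it greedily starting from $D = \emptyset$. Set $B := Y \cup D$, call the vertices of $B$ ``blue'' and the rest ``red'', and apply Theorem~\ref{Main}.

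Two of the three hypotheses of Theorem~\ref{Main} should be immediate. $G[B]$ is acyclic by the choice of $D$. And $V(G) \setminus B = X \cup (V_4 \setminus D)$, so any cycle in $G[V \setminus B]$ either contains a vertex in $V_4 \setminus D$ (of degree $4$) or lies in $G[X]$ --- and the latter is impossible since $G[X]$ is acyclic.

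The hard part will be verifying that $B$ hits every face, and this is where I expect the main obstacle to lie. I would argue by contradiction: suppose some face $\{b, r, g\}$ (labelled by the three colour classes) is uncovered. Since $B$ contains no red vertex, $r \notin B$; since all blue degree-$\geq 6$ vertices lie in $Y \subseteq B$ but $b \notin B$, $b$ must be a blue degree-$4$ vertex with $b \notin D$; and $g \notin B$ forces $g \notin Y$. To contradict the maximality of $D$, I would show that $b$ can be added to $D$ without creating a cycle in $G[Y \cup D]$. The link of the degree-$4$ vertex $b$ is a $4$-cycle alternating between two red and two green neighbours; the red neighbours are never in $B$, and among the two green neighbours, $g$ itself lies outside $Y$, hence outside $B$. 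So $b$ has at most one neighbour in $B$, and adding $b$ to $D$ creates no cycle --- contradicting maximality. Theorem~\ref{Main} then yields that $G^*$ is Hamiltonian.
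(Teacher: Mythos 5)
Your proof is correct, and it reaches Theorem~\ref{Main} through a genuinely different construction than the paper's. The paper takes the \emph{face-hitting} side to be $V_1 = X \cup R$ (where $R$ is the red colour class), so that hitting every face is immediate, and then works to destroy cycles: any cycle in $G[V_1]$ must pass through a red degree-$4$ vertex whose two green neighbours already cover its incident faces, so that vertex can be pushed to the other side; iterating yields an acyclic face-hitting set. You instead build the \emph{acyclic} side first, taking $B = Y \cup D$ with $D$ a maximal set of blue degree-$4$ vertices keeping $G[Y\cup D]$ acyclic, so acyclicity is immediate, and then use maximality to establish face-hitting: an unhit face $\{b,r,g\}$ forces $b$ to be a blue degree-$4$ vertex outside $D$ with at most one neighbour in $B$ (its red neighbours are never in $B$, and one green neighbour is the unhit $g$), so $b$ could have been added to $D$. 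The two arguments apply Theorem~\ref{Main} with essentially opposite sides of the vertex set playing the role of the blue (face-hitting, acyclic) class, and they trade which hypothesis is free and which must be earned; both verifications of the remaining hypothesis (every cycle on the other side contains a degree-$4$ vertex, using that degree $5$ is impossible in an Eulerian triangulation) are the same. Your extremal-set argument avoids the paper's iterative pruning and is arguably cleaner to state, at the cost of a slightly longer case check for the face-hitting property.
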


\begin{proof}
Let $R$ be the red vertices of $G$. 
In order to apply Theorem~\ref{Main}, initially set $V_1 = X \cup R$ and $V_2 = V \setminus V_1$.
Thus $V_1$ hits all faces and every cycle in $G[V_2]$ contains a vertex of degree $4$ (since $G[Y]$ is acyclic).
It is also required that $G[V_1]$ is acyclic.  
Any cycle in $G[V_1]$ contains a red vertex $v$ of degree $4$.
The vertex $v$ has two green and two blue neighbours and the blue ones are in $V_2$.
So the cycle goes through the two green neighbours. These green neighbours touch all faces adjacent to $v$, so we may move $v$ from $V_1$ to $V_2$. 
Do this until there are no cycles in $G[V_1]$, then apply Theorem~\ref{Main}.
\end{proof}

All our results are based on the following definition. Let $G$ be a planar triangulation. A subgraph $H$ of $G$ \emph{permeates} $G$ if $H$ is induced and $H$ hits every face (that is, each face of $G$ is incident to some vertex in $H$). It is well known that  $G^*$ is Hamiltonian if and only if $G$  contains a permeating  subtree, and that the complement of a permeating subtree is another permeating subtree (see Section~\ref{sec:PermeatingTree}). 
%Some recent partial results on Barnette's Conjecture \cite{Feder} construct permeating subtrees with vertices from only two colour classes in a proper colouring of an Eulerian triangulation. 
%{\color{red} Jens: I'm not sure the recent Discrete Math papers do that. M: Maybe this means Feder-Subi? DW: Feder+Subi is enough.}. M & DW: Not true of Feder-Subi either, but it is for our method.
Our final result shows that any approach that, like Corollary~\ref{ColouredCorollary}, constructs a permeating subtree with all or no vertices from a colour class of the proper $3$-colouring of an Eulerian triangulation, is insufficient to prove Barnette's Conjecture. 
% A \emph{separating triangle} is a $3$-cycle whose deletion disconnects the graph.

\begin{theorem}
\label{Construction}
For every integer $k$ there is a properly $3$-coloured Eulerian planar triangulation $G$ %(with $33k+3$ vertices)
% without separating triangles, 
such that every permeating subtree of $G$ contains at least $k$ vertices from each colour class, and excludes at least $k$ vertices from each colour class.
\end{theorem}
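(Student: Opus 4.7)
The plan is to construct $G_k$ explicitly, splitting the argument into a reduction to a one-sided inequality and then a gadget-based construction.

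\emph{Reduction.} As noted in Section~\ref{sec:PermeatingTree}, the complement of a permeating subtree is again a permeating subtree. Applying ``every permeating subtree contains $\geq k$ vertices of each colour class'' to the complementary subtree $G[V(G_k)\setminus V(T)]$ of any given permeating subtree $T$ yields $|C|-|V(T)\cap C|\geq k$ for each colour class $C$, i.e.\ $T$ excludes at least $k$ vertices of each colour. So it suffices to prove the ``contains'' half.

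\emph{Construction and forcing lemma.} I would take a base Eulerian planar triangulation $H$ with at least $3k$ pairwise vertex-disjoint facial triangles, and attach to $3k$ of its faces copies of a small ``forcing gadget'' $\Gamma$, rotating the gadget's colour assignment so that $k$ copies force blue, $k$ force red, and $k$ force green. Attaching $\Gamma$ at a triangle preserves Eulerianness (each shared boundary vertex receives an even-degree contribution from $\Gamma$), keeps the graph a planar triangulation, and extends the (unique) proper $3$-colouring of $H$ to $G_k$. The main technical step is then a forcing lemma: for every permeating subtree $T$ of $G_k$ and every copy of $\Gamma$, that copy contributes at least one vertex of its forced colour to $V(T)$. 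This lemma would be proved by case analysis on $V(T)\cap\{a,b,c\}$, where $abc$ is the boundary triangle of the copy in question: since $abc$ is a $3$-cycle and $T$ is an induced subtree, $V(T)$ contains at most two of $\{a,b,c\}$, so at least one boundary vertex $v^\star$ is missing from $V(T)$. The interior faces of $\Gamma$ incident to $v^\star$ must then be covered by interior vertices of $\Gamma$ in $V(T)$, and the induced-tree constraint combined with the proper $3$-colouring inside $\Gamma$ forces at least one such interior vertex of the prescribed colour. Summing over the $3k$ copies yields $|V(T)\cap C|\geq k$ for every colour class $C$, as required.

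\emph{Main obstacle.} The principal obstacle is designing a gadget whose forced colour can be chosen at will. Naive candidates---such as a glued octahedron, or a triangle ``triforced'' by three interior vertices forming an inner triangle---force only \emph{some} interior vertex without colour control, and in fact admit permeating subtrees avoiding any prescribed colour. A workable gadget likely needs an asymmetric internal structure: for instance, a small Eulerian sub-triangulation nested inside a separating triangle chosen so that the only way an induced subtree can reach the innermost face from outside is via a vertex of the desired colour class. If a direct gadget proves elusive, a backup approach is a global Euler-style counting argument on the dual Hamilton cycle in $G_k^*$, constraining the per-colour face counts on each side of the cycle.
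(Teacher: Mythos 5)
Your top-level scaffolding matches the paper's: reduce the ``excludes'' half to the ``contains'' half via complementation of permeating subtrees, build $G$ by attaching $3k$ colour-rotated copies of a forcing gadget along facial triangles, and localise the argument by showing that a permeating subtree of the whole graph restricts to a permeating subtree of each gadget (your separating-triangle observation is essentially the paper's Lemma~\ref{inductree}, though the paper glues the copies in a chain rather than a star). But the proposal has a genuine gap, and you name it yourself: you do not exhibit a gadget, and the forcing lemma for the gadget is the entire mathematical content of the theorem. Without it, nothing is proved; your ``forcing lemma'' sketch (some boundary vertex is missing, so interior vertices must cover its incident faces, and the colouring ``forces'' the right colour) is exactly the kind of argument you correctly observe fails for the naive candidates, and you give no reason it succeeds for any concrete $\Gamma$.

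The missing idea in the paper is that the forcing need not come from asymmetric internal structure at all: the gadget $H$ is the barycentric subdivision of the tetrahedron (Figure~\ref{H}), a highly symmetric Eulerian triangulation in which one colour class is \emph{exactly} the set of degree-$4$ vertices while the other two classes consist of degree-$6$ vertices. The forcing lemma (Lemma~\ref{deg4}) is purely degree-based: every permeating subtree of $H$ contains a degree-$4$ vertex. Its proof shows that a permeating subtree avoiding all degree-$4$ vertices would have to be an induced path (a degree-$3$ branch vertex forces a chord), and then counts faces: each vertex of the path contributes $6$ incident faces, of which $2$ are shared with its predecessor, so the number of faces hit is congruent to $2$ modulo $4$ and cannot equal the $24$ faces of $H$. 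Rotating which colour class plays the role of the degree-$4$ vertices across the $3k$ copies then gives $k$ forced vertices of each colour. Your backup suggestion (a global counting argument on the dual Hamiltonian cycle) is not developed enough to assess, but as written the proposal stops short of a proof.
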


Extensive surveys of the many results relating to Barnette's Conjecture~\cite{Lu,Florek,Florek12,Krooss,Chia,ChiaOng,Barnette1969a,Borowiecki,Holton,Feder,Plummer,Hertel,laTorre,Kelmans,HoltonAldred00,Aldred00,Aldred99,HoltonAldred} can be found in~\cite{Hertel,laTorre}.
One important result due to Kelmans~\cite{Kelmans} establishes equivalence between Barnette's Conjecture and several apparently different statements.
For example, to prove the dual version of Barnette's Conjecture it is sufficient to consider only triangulations without separating triangles. 
Since they are of significant interest but the original publications contained very few details, in Section~\ref{sec:kelmans} we take the opportunity to explain Kelmans' proofs.

%%%%%%%%%%%%%%%%%%%%%%%%%%%%%%%%%%%%%%%%
\section{A Useful Lemma}
\label{sec:PermeatingTree}

The following  well-known lemma characterises when the dual of a planar triangulation is Hamiltonian \cite{Stein,Skupien,Florek,Florek12}. We include a proof for completeness. See Figure~\ref{bstet} for an example. 
 
\begin{lemma}
\label{lem:PermeatingTree}
The following are equivalent for a planar triangulation $G$:
\begin{enumerate}
\item $G^*$ is Hamiltonian,
\item $G$ contains a permeating subtree,
\item $G$ contains two disjoint permeating subtrees that partition $V(G)$. 
\end{enumerate}
\end{lemma}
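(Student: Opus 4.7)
The plan is to prove the cyclic chain of implications $(1)\Rightarrow(3)\Rightarrow(2)\Rightarrow(1)$; the middle step is trivial (take either of the two trees). Both nontrivial implications rest on three ingredients: the standard planar-dual correspondence between cycles of $G^*$ and bonds of $G$, the Jordan curve theorem, and an Euler-formula edge count using $|E(G)|=3n-6$ and $|V(G^*)|=2n-4$ for a triangulation on $n$ vertices.

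For $(1)\Rightarrow(3)$, I view a Hamiltonian cycle $C$ of $G^*$ as a Jordan curve in the sphere that meets every face of $G$. It splits $V(G)$ into the two sides $V_1,V_2$. By the cycle-to-bond duality, the edges of $G$ crossing $C$ form a bond, so $G[V_1]$ and $G[V_2]$ are each connected. At each face of $G$ (a degree-$3$ vertex of $G^*$) exactly two of the three incident dual edges lie on $C$, so every triangular face has two cut edges and therefore meets both $V_1$ and $V_2$; hence both induced subgraphs are permeating. The edge count $|E(G[V_1])|+|E(G[V_2])| = (3n-6)-(2n-4) = n-2 = (|V_1|-1)+(|V_2|-1)$ then forces both to be trees.

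For $(2)\Rightarrow(1)$, given a permeating subtree $T$ I set $T':=G[V(G)\setminus V(T)]$ and show $T'$ is also a permeating subtree. Once this is done, the edges between $V(T)$ and $V(T')$ form a $2$-regular subgraph of $G^*$ (each face contributes exactly two cut edges by the previous paragraph), and since removing them from $G$ leaves exactly the two connected components $T$ and $T'$, this $2$-regular subgraph is a single cycle, i.e., a Hamiltonian cycle of $G^*$. To show $T'$ is a permeating subtree, first note that since $T$ is a tree it is triangle-free, so no face has all three vertices in $V(T)$ and $T'$ hits every face. The essential step is acyclicity of $T'$: if $T'$ contained a cycle $K$, then because $T$ is connected its vertices lie entirely on one side of the Jordan curve $K$; the other side then contains some face of $G$ (either $K$ is itself a facial triangle whose three vertices all lie in $V(T')$, or $K$ is non-facial and its topological interior contains a face) whose vertices are entirely in $V(T')$, contradicting that $T$ permeates. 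The face count from the previous paragraph then gives $|E(T')|=|V(T')|-1$, so $T'$ is a tree.

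The main obstacle is this acyclicity step: one has to handle separately the degenerate case when $K$ is itself a face of $G$ and the general case when $K$ bounds a nontrivial disk, and in both cases justify via the Jordan curve theorem (using connectedness of $T$) that $V(T)$ sits on a single side of $K$. Once acyclicity is secured, the remainder of the proof is routine Euler-formula bookkeeping.
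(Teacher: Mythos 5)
Your proof is correct and follows essentially the same approach as the paper's: Jordan curve arguments (connectedness of one tree forcing an unhit face inside any cycle of the other) for acyclicity and permeation, plus Euler-formula edge counting to upgrade forests to trees; the only cosmetic difference is that you invoke cycle--bond duality where the paper contracts the two trees, and you order the connectivity/acyclicity bookkeeping slightly differently. No gaps.
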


\begin{proof}%{\color{blue}M: I have added some more proofs of more directions. They might lead to a shorter overall proof. Also, (1) iff (3) holds for all planar graphs I reckon.}
(2) $\Longrightarrow$ (3): Let $T$ be a permeating subtree, and let $T'$ be the subgraph of $G$ induced by the vertices not in $T$. 
$T'$ is permeating, otherwise $T$ contains a whole facial cycle.
If $T'$ contains a cycle $C$, then $T$ lies either inside or outside $C$ (since $T$ is connected).
The other side of $C$ contains a face of $G$ that $T$ does not hit.
Hence $T'$ is acyclic, and $T\cup T'$ is a forest.

It remains to show that $T'$ is connected.
Suppose $G$ has $n$ vertices, $m$ edges and $f$ faces.
For each triangle of $G$, $T \cup T'$ contains one of its three edges, so the number of edges is $f/2$.
Since $G$ is a triangulation, $m=3f/2$.
Combining this with Euler's formula yields $f/2 = n-2$.
An $n$-vertex forest with $n-2$ edges has two components, %Thus $T\cup T'$ has two components, 
so $T'$ is connected.

(3) $\Longrightarrow$ (2): Trivial.

(3) $\Longrightarrow$ (1): Contracting each of the two trees to a single vertex leaves all faces intact, and the resulting multigraph has two vertices and no loops. The dual of this graph is a cycle, which corresponds to a Hamiltonian cycle in $G^*$.

(1) $\Longrightarrow$ (3): Let $C$ be a Hamiltonian cycle in $G^*$. 
The cycle $C$ determines a closed Jordan curve $C'$ in the plane that avoids all the vertices of $G$, and only crosses the edges of $G$ that are dual to the edges of $C$. 
Let $T_1$ be the subgraph of $G$ induced by  the vertices inside of $C'$, and $T_2$ the subgraph of $G$ induced by the vertices outside of $C'$. 
Clearly $T_1$ and $T_2$ partition $V(G)$.
If $T_1$ or $T_2$ contains a cycle, then that cycle would contain a face of $G$ not met by $C$, which contradicts the Hamiltonicity of $C$. 
Hence $T_1 \cup T_2$ is a forest. 
In particular, no facial cycle is contained in $T_1$ or $T_2$, so both $T_1$ and $T_2$ are permeating.

To show that $T_1$ and $T_2$ are trees, suppose $G$ has $n$ vertices, $m$ edges and $f$ faces. 
Then the number of edges in $C$ is $f$, and the edges between $T_1$ and $T_2$ are precisely the $f$ edges dual to the edges of $C$. 
Hence by Euler's formula $T_1$ and $T_2$ contain a total of $m-f=n-2$ edges. 
An $n$-vertex forest with $n-2$ edges has two components, so $T_1$ and $T_2$ are trees.
\end{proof}

\begin{figure}[!ht]
\begin{center}\includegraphics{./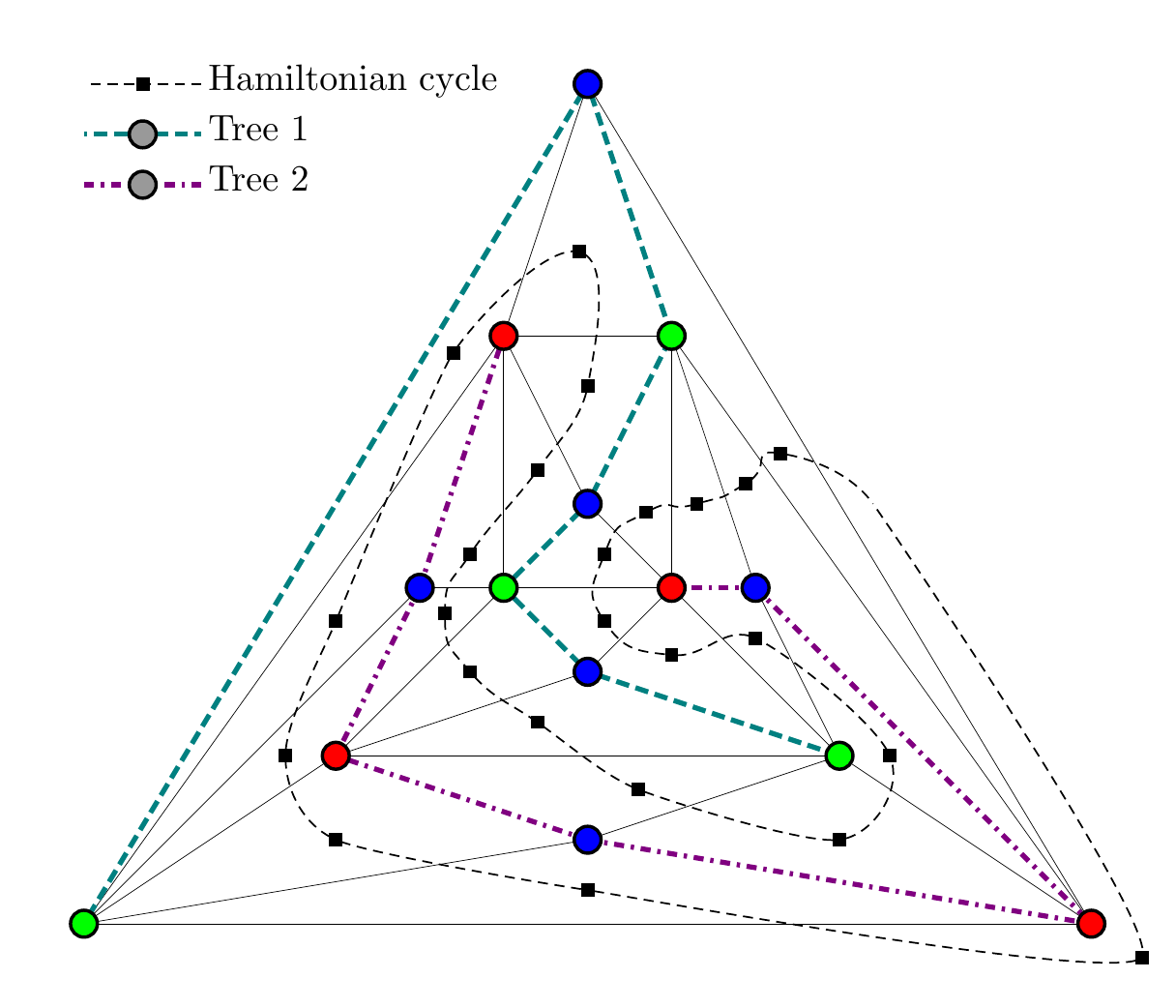} \end{center}   
\vspace*{-5ex}
\caption{A planar triangulation $G$ with two disjoint permeating subtrees and the corresponding Hamiltonian cycle  in $G^*$.}
\label{bstet}
\end{figure}

Note that the fact that $G$ is a triangulation is only used in (2)$\Longrightarrow$(3). (1)$\iff$(3) holds for all $2$-connected planar graphs. Also note that the permeating property could be omitted in (3) since two disjoint trees that cover all vertices must necessarily be permeating.

\section{Proof of Theorem~\ref{Main}}

Let $B$ be the set of blue vertices in  $G$. Let $R$  be the subgraph of $G$ induced by the red vertices. An edge of $R$ is \emph{short} if at least one of its endpoints has degree at most 4 in $G$, otherwise the edge is \emph{long}. Let $H$ be the planar dual of $R$. Note that $H$ may have loops or parallel edges. Each edge of $H$ is \emph{short} or \emph{long} depending on whether the dual edge in $R$ is short or long. 

Since every planar dual is connected, $H$ is connected.  Let $H'$ be the spanning subgraph of $H$ consisting of the short edges. We now prove that $H'$ is connected. Suppose, on the contrary, that $H'$ is disconnected. Since $H'$ is obtained from the connected graph $H$ by deleting the long edges,  some set $C$ of long edges  form a minimal edge cut in $H$. Let $C^*$ be the set of edges of $R$ that are dual to the edges in $C$. By planar duality, $C^*$ is a cycle in $R$. Every edge in $C^*$ is long. Thus $G$ contains a red cycle all of whose vertices have degree at least 5 in $G$. This contradiction proves that $H'$ is connected. Let $T$ be a spanning tree of $H'$. 

Construct a set $S$ of red vertices as follows. Consider each edge $vw$ in $T$ in turn. Let $xy$ be the short edge of $R$ that is dual with $vw$. At least one of $x$ and $y$, say $x$, has degree at most $4$ in $G$. Add $x$ to $S$. 

We now prove  that $G[B\cup S]$ is a permeating subtree of $G$. Since $B$  hits every face, $B\cup S$ hits every face. 
Since no face of $G$ is all red, no face of $R$ is a face of $G$. Hence there is a bijection between  the faces of $R$ and the connected components of $G[B]$, and thus also with the vertices of $T$. 
% Maybe argue here that the subgraph of G[B] in a face is connected. From a component of G[B] you can find a red cycle because G is a triangulation.
Associated with each edge of $T$ is a 2-edge path in $G[B\cup S]$ that joins two connected components of $G[B]$ via a red vertex in $S$. Hence $G[B\cup S]$ is connected. To conclude that $G[B\cup S]$ is a permeating subtree, we  now show that $B\cup S$ induces no other edges. 

First suppose that there is a vertex $x$ in $S$ with degree 3 in $G$. Say $xy$ was the short edge in $R$  when $x$ was added to $S$. Let $vxy$ and $wxy$ be the faces of $G$ incident to $xy$. Thus $v$ and $w$ are adjacent blue vertices, and $vxw$ is a face of $G$ (since $\deg_G(x)=3$). Hence the edge of $H$ dual with $xy$ is a loop, and is in no spanning tree of $H$. Therefore every vertex in $S$ has degree 4 in $G$.

Consider a vertex $x$ in $S$. So $x$ is red and has degree 4. If $x$ has four blue neighbours, then they form a blue cycle. If $x$ has three blue neighbours and $y$ is the red neighbour of $x$, then the blue neighbours induce a path in $G$, implying that the edge of $H$ dual to $xy$ is a loop, in which case $x$ is not added to $S$. If $x$ has three red neighbours, then some face incident to $x$ is not hit by the blue vertices. Hence $x$ has exactly two blue neighbours. Moreover, the blue neighbours of $x$ are not consecutive in the cyclic order around $x$, as otherwise some face incident to $x$ is not hit by the blue vertices. By construction, these two blue neighbours of $x$ are in distinct components of $G[B]$. 

% NEW VERSION Consider a vertex $x$ in $S$, and suppose $xy$ was the short edge in $R$  when $x$ was added to $S$. Let $vxy$ and $wxy$ be the faces of $G$ incident to $xy$. Since $x$ and $y$ are red, $v$ and $w$ are blue vertices. If $v$ and $w$ are adjacent then $vxw$ is a face of $G$. Hence the edge of $H$ dual with $xy$ is a loop, and is in no spanning tree of $H$. Therefore every vertex in $S$ has degree 4 in $G$.

% NEW VERSION Consider a vertex $x$ in $S$. So $x$ is red and has degree 4. If $x$ has four blue neighbours, then they form a blue cycle. If $x$ has three blue neighbours and $y$ is the red neighbour of $x$, then the blue neighbours induce a path in $G$, implying that the edge of $H$ dual to $xy$ is a loop, in which case $x$ is not added to $S$. If $x$ has three red neighbours, then some face incident to $x$ is not hit by the blue vertices. Hence $x$ has exactly two blue neighbours. Moreover, the blue neighbours of $x$ are not consecutive in the cyclic order around $x$, as otherwise some face incident to $x$ is not hit by the blue vertices. By construction, these two blue neighbours of $x$ are in distinct components of $G[B]$. 

%{\color{red} Jens: For the proof that $B \cup S$ induces no other edges, we may add that 1.\ the two blue neighbors of every $x \in S$ are not adjacent (by construction of $S$) and 2.\ the components of $G[B]$ are acyclic. DW: These things are true, but do we need them for the argument that $G[B\cup S]$ is a permeating subtree?}

Suppose that $S$ contains two adjacent vertices  $x$ and $x'$. Let $e$ be the short edge in $R$ incident to $x$ when $x$ was added to $S$. Let $e'$ be the short edge in $R$ incident to $x'$ when $x'$ was added to $S$. Let $v$ and $w$ be the two blue neighbours of $x$. Then $v,x',w$ are consecutive in the cyclic ordering of neighbours of $x$, which implies that $v$ and $w$ are also the two blue neighbours of $x'$. Thus the dual edge to both $e$ and $e'$ is $vw$. Hence $T$ contains a 2-cycle. This contradiction proves that no two vertices in $S$ are adjacent. 
Therefore $G[B\cup S]$ is a permeating subtree. Theorem~\ref{Main} follows by Lemma~\ref{lem:PermeatingTree}.

%%%%%%%%%%%%%%%%%%%%%%%%%%%%%%%%%%
\section{Proof of Theorem~\ref{Construction}}
%%%%%%%%%%%%%%%%%%%%%%%%%%%%%%%%%%

The proof of this theorem makes use of the special graph $H$ shown $3$-coloured in Figure~\ref{H} which can be considered as the ``barycentric subdivision'' of the tetrahedron. It has the following property:

\begin{figure}[!ht]
\begin{center}\includegraphics{./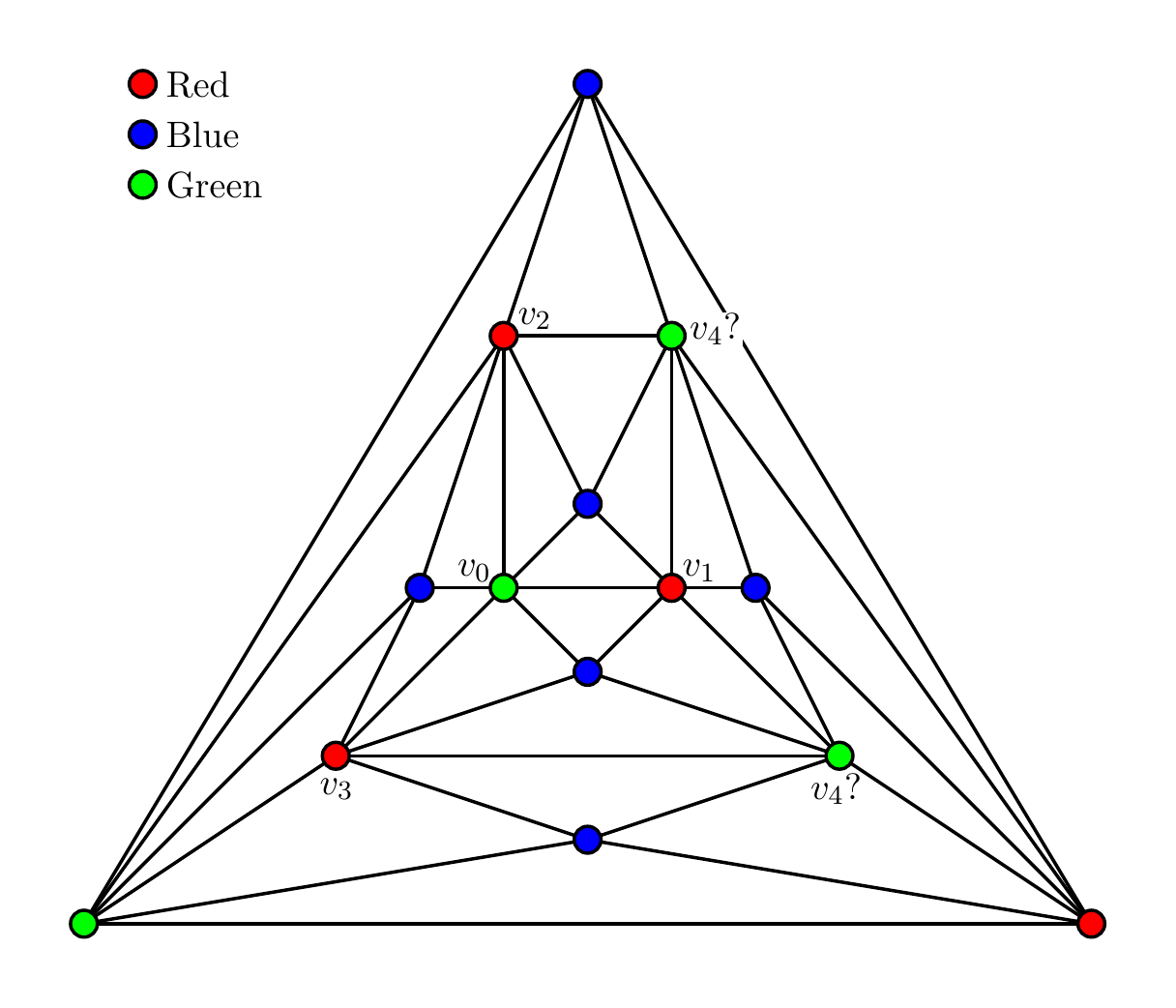} \end{center}   
\vspace*{-5ex}
\caption{The planar triangulation $H$.} % I RECOMMEND WE COMBINE THIS FIGURE WITH FIGURE~\ref{verz}. ALL WE NEED TO DO IS ADD LABELS $v_0,v_1,v_2,v_3,v_4?$ TO THIS FIGURE. MICHAEL, PLEASE DO THIS. ALSO, PLEASE ADD VERTEX COLOURS AND A LEGEND THAT EXPLAINS THE COLOURS, MATCHING THE PROOF OF Lemma~\ref{deg4}. ALSO, PLEASE WIDEN THE FIGURE.} 
\label{H}
\end{figure}

\begin{lemma}
\label{deg4}
Every permeating subtree of $H$ has at least one vertex of degree 4.
\end{lemma}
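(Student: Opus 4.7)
The plan is to prove the contrapositive: assume $T$ is a permeating subtree of $H$ containing no vertex of degree $4$ and derive a contradiction by double counting face--vertex incidences.

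First I would identify the degree distribution of $H$. Viewing $H$ as the barycentric subdivision of the tetrahedron, the four original tetrahedron vertices (blue) and the four face-subdivision vertices (green) each have degree $6$ in $H$, while the six edge-subdivision vertices (red) each have degree $4$. So $V(T)$ must lie entirely in the union of the blue and green colour classes. Euler's formula, combined with the fact that $H$ is a triangulation on $14$ vertices, gives exactly $24$ faces, and since the $3$-colouring is proper every face uses exactly one vertex of each colour.

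The core of the argument is to count the pairs $(v,\varphi)$ with $v \in V(T)$ and $v$ incident to face $\varphi$ in two ways. On one side, every vertex of $V(T)$ has degree $6$ and hence lies on $6$ faces, contributing $6|V(T)|$ incidences. On the other side, each face of $H$ is hit either once by $V(T)$ (only its blue or only its green vertex is in $V(T)$) or twice (both of them are); writing $h$ for the number of doubly-hit faces and using that $T$ is permeating, the incidence count equals $(24-h)+2h = 24+h$. Next I would observe that doubly-hit faces correspond two-to-one to edges of $T$: since the $3$-colouring is proper, the induced subgraph $T$ contains only blue--green edges; each blue--green edge of $H$ lies on exactly two faces of $H$ and these are both doubly-hit whenever the edge is in $T$; conversely a doubly-hit face forces its blue and green vertex to be adjacent in $T$. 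Hence $h = 2|E(T)|$, and since $T$ is a tree, $h = 2(|V(T)|-1)$.

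Combining the two sides of the double count gives $6|V(T)| = 22 + 2|V(T)|$, so $|V(T)| = 11/2$, which is not an integer -- the desired contradiction. The only delicate points are verifying the degree distribution of $H$ and establishing the two-to-one correspondence between doubly-hit faces and edges of $T$; once these are in place the arithmetic settles the matter at once, with no case analysis on the structure of $V(T)$.
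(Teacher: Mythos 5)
Your proof is correct, and it takes a genuinely cleaner route than the paper's. The paper argues in two stages: first a case analysis on the structure of $H$ showing that a permeating subtree avoiding the degree-$4$ colour class must be a \emph{path} (a vertex with three tree-neighbours would force a chord and hence a cycle), and then a count of the faces incident to that path, which comes to $4n+2\equiv 2\pmod 4$ and so cannot equal $24$. Your incidence double count reaches exactly the same parity obstruction --- your identity $6|V(T)| = 24 + 2(|V(T)|-1)$ is precisely the statement that the number of distinct faces hit is $4|V(T)|+2$ --- but it applies to an arbitrary induced tree, so the reduction to a path (the only part of the paper's proof requiring inspection of Figure~\ref{H}) is never needed. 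The two facts that make this work are correctly verified in your write-up: every vertex available to $T$ has degree $6$, and since $T$ avoids one colour class of the proper $3$-colouring, each face is hit at most twice, with a doubly-hit face containing exactly one edge of $T$ (its third vertex lies in the excluded class), so the faces-to-edges correspondence is well defined and two-to-one. As a bonus your argument is slightly more general: it shows that any properly $3$-coloured triangulation in which one colour class carries all the small-degree vertices and the other two classes have degree $6$ throughout can admit a permeating tree inside those two classes only if the number of faces is congruent to $2$ modulo $4$.
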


\begin{proof}
Observe that one  colour class (say \emph{blue}) contains exactly the vertices of degree 4, whereas in the other two colour classes (say \emph{green} and \emph{red}) all vertices have degree 6. Assume that there exists a permeating subtree $T$ of $H$ containing no blue vertex.

We first show that $T$ is a path. On the contrary, assume that there is a vertex $v_0$ with three neighbours in $T$. Without loss of generality, $v_0$ is green. Then its three red neighbours $v_1, v_2,v_3$ are in $T$. These four vertices do not hit all the faces of $H$. So, without loss of generality, $v_1$ has a green neighbour  $v_4 \ne v_0$ in $T$, as shown in Figure~\ref{H}. There are two possibilities for $v_4$, one adjacent to $v_2$,  the other  adjacent to $v_3$. So adding $v_4$ creates a cycle, contradicting that $T$ is an induced tree. Hence, $T$ is a path.

%\begin{figure}[!ht]
%\begin{center}
%\includegraphics[width=8cm]{./}  
%\end{center} 
%\caption{Neighbourhood of $v_0$. Solid edges are in $T$.}
%\label{verz}
%\end{figure}

Each vertex of the path $T$ has six incident faces, two of which overlap with the faces incident to the previous vertex in $T$. Consequently, the number of faces incident to the path is congruent to $2 \mod 4$, which cannot equal the total number of faces ($24$). This  contradiction  proves Lemma~\ref{deg4}.
\end{proof}

Let $H_1,\dots,H_{3k}$ be copies of $H$ such that the degree-4 vertices in $H_i$ are coloured $i \bmod 3$. For $1\leq i\leq 3k$, let $g_i$ be the outer face of $H_i$, and let $f_i$ be a face of $H_i$ vertex-disjoint from $g_i$. Construct $G_1,\dots,G_{3k}$ recursively as follows.
Let $G_1 := H_1$. Then for $2\leq i\leq 3k$, construct $G_i$ by pasting $G_{i-1}$ and $H_i$ on faces $f_{i-1}$ and $g_i$, as illustrated in Figure~\ref{glue}. More precisely, each vertex in face $f_{i-1}$ of $G_{i-1}$ is identified with the vertex of the same colour in face
$g_i$ of $H_i$. Note that after the gluing, $f_i$ is a face of $G_i$ (so the construction makes sense). Observe that $G_i$ is a $3$-coloured Eulerian triangulation with $11i+3$ vertices. In particular, $G_{3k}$ is a $3$-coloured Eulerian triangulation with $33k+3$ vertices. 

\begin{figure}[!ht]
\begin{center}
\includegraphics[width=8cm]{./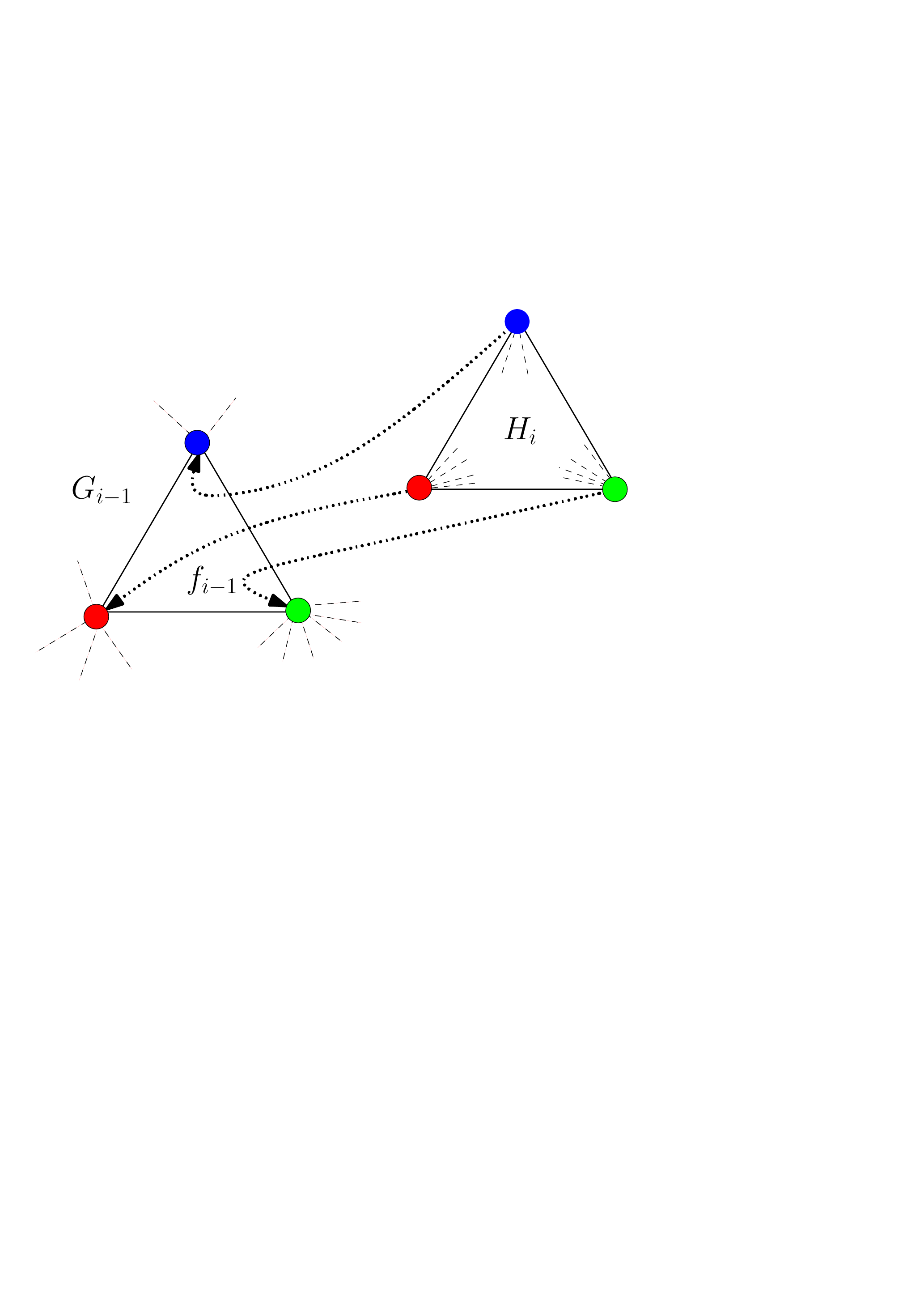}  
\end{center} 
\caption{$G_i$ is constructed by pasting a copy of $H$ into face $f_{i-1}$ of $G_{i-1}$.} % HELMUT, PLEASE REPLACE ``$f$'' BY ``$f_{i-1}$'' AND REPLACE ``$G_1$'' BY ``$H_i$'' IN THIS FIGURE.}
\label{glue}
\end{figure}

We now make the following important observation:

\begin{lemma}
\label{inductree}
For $2\leq i\leq 3k$, if $T$ is a permeating subtree of $G_{i}$, then $T\cap H_i$ is  a permeating subtree of $H_i$ and $T\cap G_{i-1}$ is a permeating subtree of $G_{i-1}$. 
\end{lemma}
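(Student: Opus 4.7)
The plan is to verify the four defining properties of a permeating subtree (induced, acyclic, connected, and hitting every face) for $T\cap H_i$, meaning the subgraph of $H_i$ induced by $V(T)\cap V(H_i)$. The argument for $T\cap G_{i-1}$ will be entirely symmetric, so I would focus on $T\cap H_i$. A useful preliminary is Lemma~\ref{lem:PermeatingTree}, which tells me that $T':=G_i[V(G_i)\setminus V(T)]$ is also a permeating subtree of $G_i$. The gluing triangle $\Delta$ (with vertices, say, $a,b,c$) is a triangle of $G_i$ lying in $V(H_i)\cap V(G_{i-1})$. Since $T$ and $T'$ are both acyclic, neither can contain all of $\{a,b,c\}$, and since $V(T)\sqcup V(T')=V(G_i)$, exactly one or two of $a,b,c$ lie in $V(T)$.

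Three of the four properties come cheaply. Induced-ness is inherited because $T$ is induced in $G_i$ and $H_i$ is a subgraph of $G_i$; acyclicity is inherited because $T\cap H_i$ is a subgraph of the tree $T$. For the face-hitting property, every face of $H_i$ other than $g_i$ is also a face of $G_i$ (the gluing only destroys $g_i=f_{i-1}$), and is therefore hit by $T$, hence by $T\cap H_i$. The face $g_i$ of $H_i$ has vertex set $\{a,b,c\}\subseteq V(H_i)$, and by the preceding paragraph $V(T)$ contains at least one of these, so $T\cap H_i$ hits $g_i$ as well.

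The main obstacle is connectedness of $T\cap H_i$, and here the separation property of $\Delta$ is essential: the only edges of $G_i$ with one end in $V(H_i)$ and one end in $V(G_{i-1})\setminus V(H_i)$ are the edges incident to $\Delta$, so every walk in $G_i$ that leaves $H_i$ and returns must pass through a vertex of $\Delta$. I would argue by contradiction. Suppose some component $C$ of $T\cap H_i$ is disjoint from $\Delta\cap V(T)$. Pick any $w\in C$ and any $x\in V(T)\setminus V(C)$; since $T$ is connected, there is a path in $T$ from $w$ to $x$, and following this path until it first leaves $H_i$ must use a vertex of $\Delta\cap V(T)$ that lies in $C$, a contradiction. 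Hence every component of $T\cap H_i$ meets $\Delta\cap V(T)$. If $|\Delta\cap V(T)|=1$ there is only one component; and if $|\Delta\cap V(T)|=2$, say $\{a,b\}$, then since $ab$ is an edge of $H_i$ and $T$ is induced, the edge $ab$ belongs to $T\cap H_i$, so $a$ and $b$ share a component. Either way $T\cap H_i$ is connected, completing the proof that it is a permeating subtree of $H_i$; swapping the roles of $H_i$ and $G_{i-1}$ yields the corresponding statement for $T\cap G_{i-1}$.
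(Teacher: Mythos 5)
Your proof is correct and follows essentially the same route as the paper: show that $T$ meets the gluing triangle in one or two vertices (adjacent, hence joined in $T$, if two), deduce connectedness of $T\cap H_i$ from the fact that every component must reach that triangle, and observe that every face of $H_i$ other than $g_i$ survives as a face of $G_i$. The only local difference is how the lower bound $|\Delta\cap V(T)|\ge 1$ is obtained: the paper argues that $g_i$ separates the two faces $g_1$ and $f_i$, both of which the connected tree $T$ must hit, whereas you invoke the complementary permeating tree $T'$ from Lemma~\ref{lem:PermeatingTree} and note that neither induced tree can contain all three vertices of the triangle; both arguments are valid, and your explicit component argument for connectedness fills in a step the paper compresses into a single ``hence.''
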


\begin{proof}
$T$ includes at least one vertex in $g_i$ since $g_i$ separates the two disjoint faces $g_1$ and $f_i$. Since $T$ is an induced tree, $T$ includes at most two vertices in $g_i$, and if $T$ includes two vertices in $g_i$ then they are adjacent in $G_i$ and thus in $T$. Hence $T\cap H_i$ is a (connected) subtree of $H_i$. Every face of $H_i$ except $g_i$ is a face of $G_i$ and is therefore hit by $T$. As observed above, at least one vertex of $g_i$ is in $T$. Thus, $T\cap H_i$ is  a permeating subtree of $H_i$. By an identical argument, $T\cap G_{i-1}$ is a permeating subtree of $G_{i-1}$. 
\end{proof}

Let $T$ be a permeating subtree of $G_{3k}$. Then, by induction applying Lemma~\ref{inductree} at each step, 
$T\cap H_i$ is a permeating subtree of $H_i$ for $1\leq i\leq 3k$. By Lemma~\ref{deg4},  $T$ includes at least one degree-4 vertex in each $H_i$.
Thus, for  $j \in \{0,1,2\}$, $T$ contains at least one vertex coloured $j$ in each $H_i$ such that $i \equiv j \pmod 3$.
Since $f_i$ and $g_i$ are vertex-disjoint, if $H_i$ and $H_{i'}$ have a vertex in common then $|i-i'| \leq 1$.
Thus these vertices coloured $j$ are distinct. Hence, $T$ contains at least $k$ vertices coloured $j$.
This completes the proof of Theorem~\ref{Construction}. \qed

\bigskip

A \emph{separating triangle} is a $3$-cycle whose deletion disconnects the graph.
In the following section we will see that the most important graphs for Barnette's conjecture are those without separating triangles.
Although the example just constructed has many separating triangles, a similar but somewhat more complicated construction can be used to obtain examples without separating triangles. We omit the details.

\section{Kelmans' Equivalences}\label{sec:kelmans}

In this section we explain some important results obtained by Kelmans~\cite{Kelmans}, for which detailed self-contained proofs have not been published. We hope that this will be of help to those who investigate Barnette's Conjecture in the future. Our aim here is to clearly present the main ideas of the proofs. Some details are still left to the reader to verify.

A planar $3$-connected bipartite graph will be called simply a \emph{Barnette graph}.
Let $x$ and $y$ be edges in the same face of a Barnette graph $G$. Then $G$ is \emph{$(x^+y^-)$-Hamiltonian} if it has a Hamiltonian cycle containing $x$ and not $y$. Similarly, $G$ is \emph{$(x^+y^+)$-Hamiltonian} if it has a Hamiltonian cycle containing $x$ and $y$.

A graph is \emph{cyclically-$k$-edge-connected} if it has no edge cut of size $k-1$ such that both sides of the cut contain a cycle. (In the dual, cyclic edge cuts correspond to separating cycles.)
All Barnette graphs are cyclically-$3$-edge-connected, but also all have a face of size $4$ so they cannot be cyclically-$5$-edge connected. (This may need proof for small examples).

Given two edges $x$ and $y$ in the same face of a Barnette graph $G$, the \emph{four-pole} $G'$ is the graph formed by cutting $x$ and $y$ and adding degree one vertices to the new ends, as illustrated in the left side of Figure~\ref{fig:Kelmans1and2}. 
The new vertices are called $x_1,x_2,y_1$ and $y_2$.
We will connect four-poles together in various ways to build up larger Barnette graphs.

\begin{figure}[!ht]
\begin{center}
\includegraphics[width=5cm]{./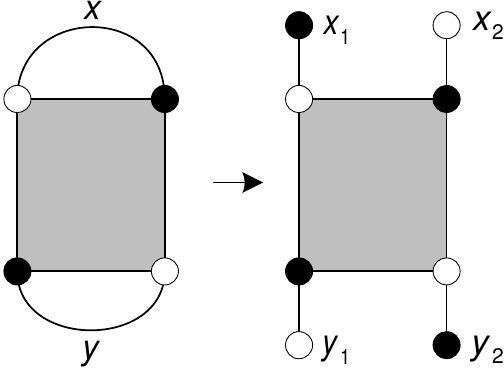}
\hfill
\includegraphics[width=5cm]{./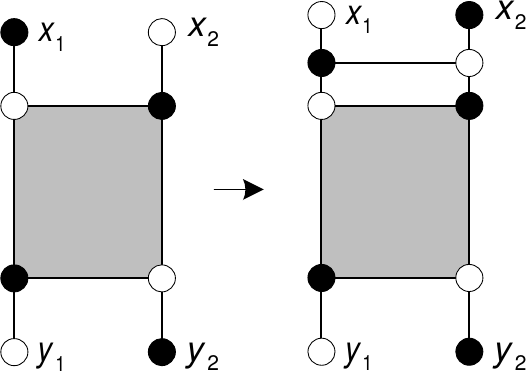}  
\end{center} 
\caption{Creating four-poles.}
\label{fig:Kelmans1and2}
\end{figure}

If $G$ is $(x^+y^-)$-non-Hamiltonian (i.e.~has no Hamiltonian cycle containing $x$ and not $y$), then $G' - \{y_1,y_2\}$ has no Hamiltonian path and is said to be \emph{$(x^+y^-)$-non-traceable}.
If $G$ is $(x^+y^+)$-non-Hamiltonian (i.e.~has no Hamiltonian cycle containing $x$ and $y$), then $G'$ can not be covered by two disjoint paths each starting in $\{x_1,x_2\}$ and ending in $\{y_1,y_2\}$.
$G'$ is said to be \emph{$(x^+y^+)$-non-traceable}.

Note that a four-pole $G'$ inherits its $2$-colouring from the Barnette graph $G$.
If necessary, the colours of the terminals can be altered by extending the four-pole as illustrated in the right side of Figure~\ref{fig:Kelmans1and2}.
It can be checked that the resulting four-pole, while not derived from a Barnette graph, retains the essential properties of $G'$ when used in the constructions that follow.

Since the number of vertices in a four-pole is even, we have the following useful \emph{covering path property}. Suppose $u$ and $v$ are terminals of $G'$ with the same colour.
Then $G' - \{u,v\}$ does not have a Hamiltonian path.

The following theorem is a summary of various results of Kelmans. The main parts of the proof are (1) $\implies$ (2) and (3) $\implies$ (4), which are proved in~\cite{Kelmans}, but with most details left to the reader. The other required implications are trivial except for (4)$\implies$(1) which is claimed by Kelmans in~\cite{Kelmans2003}.

\begin{theorem}[Kelmans]
The following are equivalent:
\begin{enumerate}
\item Every Barnette graph is Hamiltonian (Barnette's Conjecture).
\item Every Barnette graph is $(x^+y^-)$-Hamiltonian for every choice of $x$ and $y$ in the same face.
\item Every cyclically-$4$-edge-connected Barnette graph is Hamiltonian.
\item Every cyclically-$4$-edge-connected Barnette graph is $(x^+y^+)$-Hamiltonian for every choice of $x$ and $y$ in the same face.
\end{enumerate}
\end{theorem}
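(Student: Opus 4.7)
The plan is to dispose of the trivial implications immediately and then focus on the three non-trivial directions, namely (1)$\Rightarrow$(2), (3)$\Rightarrow$(4) and (4)$\Rightarrow$(1). Any cycle witnessing $(x^+y^-)$- or $(x^+y^+)$-Hamiltonicity is in particular a Hamiltonian cycle, so (2)$\Rightarrow$(1) and (4)$\Rightarrow$(3); and every cyclically-$4$-edge-connected Barnette graph is a Barnette graph, giving (1)$\Rightarrow$(3). Together with the three non-trivial implications this closes the circle.

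For (1)$\Rightarrow$(2) I would argue by contradiction: assume some Barnette graph $G$ has edges $x,y$ in a common face admitting no $(x^+y^-)$-Hamiltonian cycle; then the four-pole $G'$ from Figure~\ref{fig:Kelmans1and2} is $(x^+y^-)$-non-traceable. The idea is to paste several copies of $G'$ by identifying terminals in a ring-like pattern, inserting the colour-altering extension from the right-hand side of that figure wherever needed to keep the bipartition consistent, so that the assembled graph $\tilde G$ is again a Barnette graph. Any Hamiltonian cycle of $\tilde G$ must then restrict, in at least one copy of $G'$, to a Hamiltonian path of $G'-\{y_1,y_2\}$, contradicting non-traceability; the covering-path parity observation is what rules out the remaining terminal-usage patterns. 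The implication (3)$\Rightarrow$(4) proceeds along the same lines using $(x^+y^+)$-non-traceability, with the extra requirement that the gluing pattern be chosen so that no new 3-edge cut separating two cycles is introduced, keeping $\tilde G$ cyclically-$4$-edge-connected.

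For (4)$\Rightarrow$(1) I would use induction on $|V(G)|$. If $G$ is cyclically-$4$-edge-connected, then (4) (via (3)) immediately yields a Hamiltonian cycle. Otherwise there is a 3-edge cut $C=\{e_1,e_2,e_3\}$ both of whose sides contain a cycle; splitting $G$ along $C$ produces two four-poles which I would close off with small colour-preserving caps to obtain two strictly smaller Barnette graphs $\hat G_1,\hat G_2$. The caps are designed so that two designated cap edges lie in a common face, and the inductive hypothesis applied through (4) to each $\hat G_i$ provides a Hamiltonian cycle using that pair of edges. Splicing the two cycles along the three edges of $C$ then reconstructs a Hamiltonian cycle of $G$.

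The principal obstacle is orchestrating all of the gluings, caps and colour-alterations so that every structural property required of a Barnette graph (planarity, $3$-connectedness, bipartiteness, and in (3)$\Rightarrow$(4) also cyclic $4$-edge-connectedness) is preserved, while simultaneously forcing any Hamiltonian cycle of the assembled graph to project onto exactly the paths or cycles that non-traceability or induction is designed to forbid. The parity-based covering-path property does most of the work of excluding escape configurations, so the delicate case analysis really lives in the choice of gadgets that interact cleanly with it.
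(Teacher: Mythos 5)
Your overall architecture matches the paper's: the same three implications carry the weight, the gadget constructions for (1)$\Rightarrow$(2) and (3)$\Rightarrow$(4) are in the same spirit (assemble copies of a non-traceable four-pole so that any Hamiltonian cycle would have to trace a forbidden path in one copy, with the parity/covering-path property killing the escape cases), and (4)$\Rightarrow$(1) is a cut-and-splice induction. However, there is a genuine gap in your (4)$\Rightarrow$(1).

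You write that ``the inductive hypothesis applied through (4) to each $\hat G_i$ provides a Hamiltonian cycle using that pair of edges.'' The inductive hypothesis is statement (1) for smaller graphs: it gives you \emph{some} Hamiltonian cycle, with no control over which two of the three cut edges it uses. To prescribe that pair you must invoke (4) itself, and (4) only applies to cyclically-$4$-edge-connected graphs --- which your capped pieces $\hat G_1,\hat G_2$ need not be, and you give no mechanism to ensure this. As written the step is circular: controlling the cut edges is exactly what (4) provides and what mere Hamiltonicity does not. The fix (which is the paper's key move) is to choose a cyclic $3$-edge-cut one of whose sides contains no further cyclic $3$-edge-cut; then the graph obtained by contracting the other side is cyclically-$4$-edge-connected, so (4) applies to it \emph{directly} and lets you match the pair of cut edges used by an arbitrary Hamiltonian cycle of the other contracted piece (obtained from the induction). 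Note also that you only need this control on one side, not ``each $\hat G_i$''; demanding it on both is neither achievable in general nor necessary. A secondary point: you should justify that the contracted pieces are bipartite --- this needs the degree-sum argument showing that the three cut-edge endpoints on each side share a colour, which ``colour-preserving caps'' quietly assumes. Finally, in (3)$\Rightarrow$(4) the paper's gadget uses two different four-poles, an $(x^+y^+)$-non-traceable one and an $(x^+z^-)$-non-traceable one (the latter obtained from the same graph via the cubic-graph observation); a construction using only the former is not obviously sufficient.
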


\begin{proof}
We begin with the easiest implications. (1)$\implies$(3) requires no explanation. (2)$\implies$(4) follows from the following observation. If a Barnette graph $G$ is $(x^+y^+)$-non-Hamiltonian, then there is an edge $z$ adjacent to $y$ and in the same face as $x$, and $G$ is $(x^+z^-)$-non-Hamiltonian. This is because $G$ is cubic, so if a Hamiltonian cycle avoids some edge, it passes through every adjacent edge.

(4)$\implies$(1): The proof is by induction on the number of cyclic-$3$-edge-cuts in $G$. 
If there are none then $G$ is Hamiltonian by (4). 
If there are some then choose one such that one side of the cut contains no cyclic-$3$-edge-cut. 
Let $G_1$ and $G_2$ be the graphs obtained by contracting one side of the cut. 
By a simple degree sum argument, the vertices on each side of the cut share the same colour, so $G_1$ and $G_2$ are bipartite, and hence Barnette graphs. 
Thus both are Hamiltonian by the induction hypothesis.
One of them, say $G_1$, is cyclically-$4$-edge-connected, and hence $(x^+y^+)$-Hamiltonian for all $x$ and $y$. 
Therefore a Hamiltonian cycle can be found in $G_1$ that is compatible with the Hamiltonian cycle in $G_2$, giving a Hamiltonian cycle in $G$.

(1)$\implies$(2): 
Suppose there is an $(x^+y^-)$-non-Hamiltonian Barnette graph $G$. 
Create an $(x^+y^-)$-non-traceable four-pole $G_1'$.
Applying the construction of Figure~\ref{fig:Kelmans1and2}.2 if needed, we may assume that $x_1$ and $y_1$ receive different colours.
Take two copies of $G_1'$ and connect them as shown in Figure~\ref{fig:Kelmans3} to create a new Barnette graph $G_2$.
Using the covering path property and the $(x^+y^-)$-non-traceable property, a straightforward case analysis shows that $G_2$ has no Hamiltonian cycle avoiding the edge marked $e$.
Now take two copies of $G_2$ and connect them as shown in Figure~\ref{fig:Kelmans4} to create the Barnette graph $G_3$, which has no Hamiltonian cycle containing the edge marked $d$. Finally take two copies of $G_3$ and connect them in much the same way as in Figure~\ref{fig:Kelmans4}, but using the edge $d$. 
This yields a non-Hamiltonian Barnette graph.

\begin{figure}[!ht]
\centering
\includegraphics[width=6cm]{./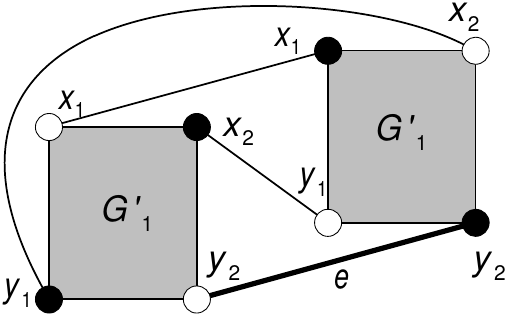}
\caption{Creating $G_2$ in the proof of (1)$\implies$(2).}
\label{fig:Kelmans3}
\end{figure}

\begin{figure}[!ht]
\centering
\includegraphics[width=10cm]{./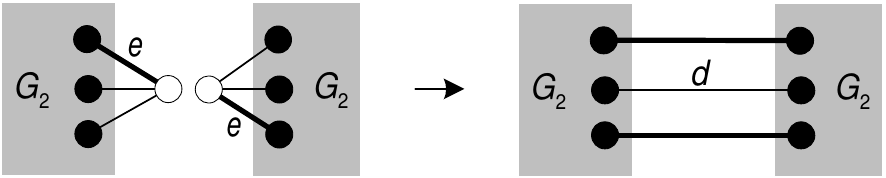}  
\caption{Creating $G_3$ in the proof of (1)$\implies$(2).}
\label{fig:Kelmans4}
\end{figure}

(3)$\implies$(4): 
Suppose there is an $(x^+y^+)$-non-Hamiltonian cyclically-$4$-edge-connected Barnette graph $G$.
Create an $(x^+y^+)$-non-traceable four-pole $G_1'$. 
We may again assume that $x_1$ and $y_1$ get different colours.
As noted at the beginning of the proof, $G$ is also $(x^+z^-)$-non-Hamiltonian for an edge $z$ adjacent to $y$. 
Thus we may also create an $(x^+z^-)$-non-traceable four-pole $G_2'$. 
Take two copies of $G_1'$ and two of $G_2'$ and connect them as shown in Figure~\ref{fig:Kelmans5} to create a cyclically-$4$-edge-connected Barnette graph $G_3$.
Using the covering path and non-traceable properties, a reasonably easy case analysis (based on which of the edges marked $d$ and $e$ are in a supposed Hamiltonian cycle) shows that $G_3$ is non-Hamiltonian.

\begin{figure}[!ht]
\centering
\includegraphics[width=7cm]{./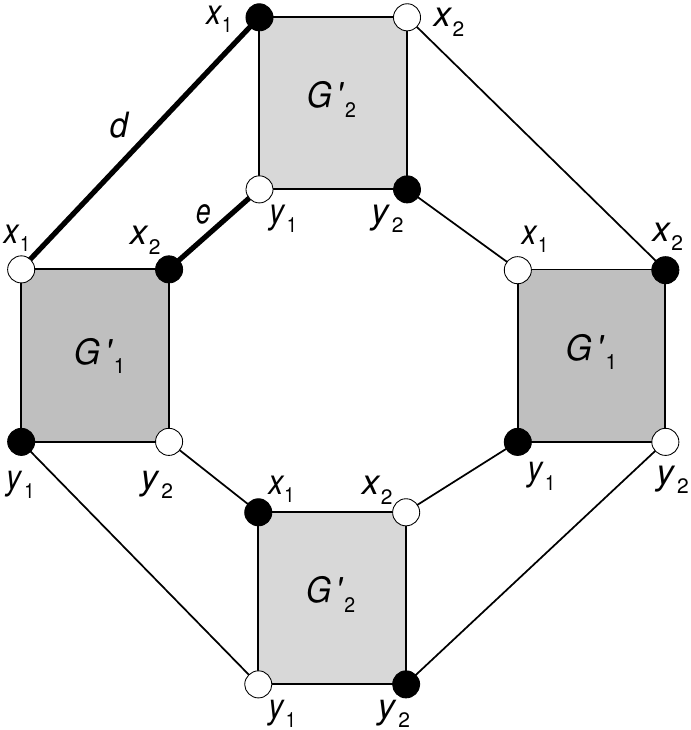}  
\caption{Creating $G_3$ in the proof of (3)$\implies$(4).}
\label{fig:Kelmans5}
\end{figure}
\end{proof}

%%%%%%%%%%%%%%%%%%%%%%%%%%%%%%%%%%%%%%%%%%%%%%%%%%%%%%%%%%%%
\bibliographystyle{myNatbibStyle}
\bibliography{paper}
\end{document}